\documentclass[11pt]{amsart}

\usepackage{xypic}
\usepackage{amssymb}




\newtheorem{theorem}{Theorem}[section]
\newtheorem{lemma}[theorem]{Lemma}
\newtheorem{corollary}[theorem]{Corollary}
\newtheorem{proposition}[theorem]{Proposition}
\theoremstyle{remark}
\newtheorem{remark}[theorem]{Remark}
\theoremstyle{remark}
\newtheorem{example}[theorem]{Example}




\newcommand\md{{\text{\rm mod}}} 
\newcommand\Ab{{{\mathcal A}b}} 
\newcommand\Md{{\text{\rm Mod}}} 




\newcommand{\Hom}{\mathop{\rm Hom}\nolimits} 
\newcommand{\Prod}{\mathop{\rm Prod}\nolimits} 
\newcommand{\Ker}{\mathop{\rm Ker}\nolimits} 
\newcommand\lex{\mathop{\rm Lex}\nolimits} 
\newcommand\colim{\mathop{\rm colim}} 
\newcommand{\Inj}{\mathop{\rm Inj}\nolimits}
\newcommand{\card}{\mathop{\rm card}\nolimits} 
\newcommand{\A}{\mathop{\rm A}\nolimits} 
\newcommand{\HK}{\mathop{\rm\bf K}\nolimits} 



\newcommand\opp{{^{\text{\rm op}}}} 




\newcommand\CA{{\mathcal A}}

\newcommand\CC{{\mathcal C}}
\newcommand\DC{{\mathcal D}}


\newcommand\CN{{\mathcal N}}

\newcommand\CS{{\mathcal S}}
\newcommand\CT{{\mathcal T}}

\newcommand\GR{{\mathfrak R}} 


\newcommand\N{{\mathbb N}} 
\newcommand\Z{{\mathbb Z}} 



\begin{document}

\title[A representability theorem]{A representability theorem for some huge abelian
categories}

\author{George Ciprian Modoi}
\thanks{Research supported by CNCS-UEFISCDI grant PN-II-RU-TE-2011-3-0065}

\address{"Babe\c s-Bolyai" University\\ RO-400084, Cluj-Napoca
\\ Romania}

\email[George Ciprian Modoi]{cmodoi@math.ubbcluj.ro}

\keywords{representable functor, quasi--locally presentable
category, abelianization, triangulated category with coproducts}

\subjclass[2000]{18A40, 18E30}

\begin{abstract}
We define quasi--locally presentable categories as big unions of
a chain of coreflective subcategories which are locally presentable. Under
appropriate hypotheses we prove a representability theorem for
exact contravariant functors defined on a quasi--locally
presentable category taking values in abelian groups. We show that
the abelianization of a well generated triangulated category is
quasi--locally presentable and we obtain a new proof of Brown
representability theorem. Examples of functors which are not
representable are also given.
\end{abstract}
\maketitle

\section*{Introduction}
One of the main problems occurring in the theory of triangulated
categories is to construct a left or right adjoint for a given
triangulated functor. In his influential book on this subject,
Neeman shows that the problem of finding an adjoint for a functor
between triangulated categories may be equivalently studied at the
level of abelianizations of these categories, where we have to
construct an adjoint for some exact functor between abelian
categories (see \cite[Proposition 5.3.9]{N}). Further Neeman
considers in \cite[Remark 5.3.10]{N} that, unfortunately this idea
is ``nearly impossible'' to be applied, since ``existence theorems
of adjoints usually depend on the categories being
well--powered'', that is one object must have only a set of
subobjects (for an object of an abelian category this it
equivalent to having only a set of quotients). But, in general,
the abelianization of a triangulated category with arbitrary
coproducts is huge, that is it does not satisfy the condition of
being well (co)powered; see \cite[Appendix C]{N}. Hence the
abelianization is often considered to be too big, hence not
manageable (see also the Introduction of Krause's work \cite{KL}).
This paper intends to change a little this perspective. More
exactly, the result about the existence of adjoints depending on
the categories being well powered is, obviously, the special
Freyd's adjoint functor theorem: if $\CC$ is a complete, well
powered category having a cogenerator, then every functor
$F:\CC\to\DC$ has a left adjoint if and only if it preserves
limits, see \cite[p. 89]{PF}. We argue that even if the abelianization of a well
generated triangulated category is not always well (co)powered, it
has enough structure allowing us to apply the general Freyd's
adjoint functor theorem: if $\CC$ is a complete category, then
every functor $F:\CC\to\DC$ has a left adjoint if and only if it
preserves limits and satisfies the solution set condition (that is
for every $f:D\in\DC$ there is a set maps $f_i:D\to F(C_i)$, $i\in I$ in $\DC$,
where $C_i\in\CC$, such that every map $D\to
F(C)$, with $C\in\CC$, factors as $f=F(k)f_i$, for some 
$k:C_i\to C$ in $\CC$; see
\cite[0.7]{AR}). The problem of the existence of the adjoints and
the one of representability of a given functor are strongly
related (to fix the settings, suppose that we work with
preadditive categories): First, a functor $F:\CC\to\DC$ has a left
adjoint if and only if the functor $\DC(D,F(-)):\CC\to\Ab$ is
representable for all $D\in\DC$; second a functor $F:\CC\to\Ab$
has a left adjoint if and only if it is representable (actually it
is represented by the left adjoint evaluated at $\Z$, see \cite[p. 81-82]{PF}).

The paper is organized as follows: In the first section we
introduce the notion of quasi--locally presentable category; it is
a category which may be written as a union of a chain of coreflective
subcategories which are locally $\lambda$--presentable, where
$\lambda$ runs over all regular cardinals. Under appropriate
hypotheses, we prove a representability theorem for exact,
contravariant functors defined on such categories.

In the second section we recall the definition of the
abelianization of a triangulated category and we show how the
study of Brown representability may be done at the level of this
abelianization. For well generated triangulated categories we show
that the abelianization is quasi--locally presentable and
satisfies the supplementary hypotheses allowing us to apply the
representability theorem proved in the previous section. As a
consequence we obtain a new proof of Brown representability
theorem for well generated triangulated categories.

All categories which we work with are preadditive (enriched over
$\Ab$). Everywhere in our paper we may equally adopt the point of
view of G\"odel--Bernays--Von Neumann axiomatization of set
theory, with the distinction made there between classes and sets,
or to work in a given Grothendieck universe. In this last case, a
set means a small set relative to that universe, whereas a class
is a set which is not necessarily small.

\vskip5mm\noindent{\em Acknowledgements.} For the second an third version
of this paper we acknowledge the financial support of the grant
CNCS-UEFISCDI code PN-II-RU-TE-2011-3-0065. We also would like to
thank an anonymous referee for many suggestions of improvement of
the earlier version.

\section{Quasi--locally presentable abelian categories}

We begin this section by recalling some definitions: A cardinal
$\lambda$ is said to be {\em regular\/} provided that it is
infinite and it cannot be written as a sum of less than $\lambda$
cardinals, all smaller than $\lambda$. Denote by $\GR$ the class
of all regular cardinals.

Let $\CA$ be a additive category and $\CC\subseteq\CA$ be a
subcategory. Let $F:\CA\to\Ab$ be a contravariant functor. The
{\em category of elements\/} of $F|_\CC$, where $F|_\CC$ denotes
the restriction of $F$ at $\CC$, is by definition constructed as
follows:
\[\CC/F=\{(X,x)\mid X\in\CC,x\in F(C)\},\] with the morphisms
\[\CC/F((X_1,x_1),(X_2,x_2))=\{\alpha\in\CC(X_1,X_2)\mid
F(\alpha)(x_2)=x_1\}.\] In particular, for any object $A\in\CA$,
let denote
\[\CC/A=\CC/{\CA(-,A)}=\{(C,\xi)\mid C\in\CC,\xi:C\to A\},\]
\[\CC/A((C_1,\xi_1),(C_2,\xi_2))=\{\alpha\in\CC(C_1,C_2)\mid
\xi_2\alpha=\xi_1\}.\]

Consider a regular cardinal $\lambda$. A non--empty category $\CS$
is called $\lambda${\em--filtered} if the following two conditions
are satisfied: \begin{itemize}\item[F1.] For every set $\{s_i\mid
i\in I\}$ of less that $\lambda$ objects of $\CS$ there are an
object $s\in\CS$ and morphisms $s_i\to s$ in $\CS$, for all $i\in
I$. \item[F2.] For every set $\{\sigma_i:s\to t\mid i\in I\}$ of
less that $\lambda$ morphisms in $\CS$, there is a morphism
$\tau:t\to u$ such that $\tau\sigma_i=\tau\sigma_j$, for all
$i,j\in I$.
\end{itemize}
Let $A$ be an object of a category $\CA$. Then the functor
$\CA(A,-)$ preserves  the colimit of a diagram $\CS\to\CA,\
s\mapsto X(s)$ in $\CA$ (indexed over a category $\CS$), if and
only if every map $g:A\to\colim_{s\in\CS}X(s)$ factors as
\[\diagram A\dto_{f}\drto^{g}&\\
X(u)\rto_{\xi_u\hskip5mm}&\colim_{s\in\CS}X(s)\enddiagram\]
through some of the canonical maps $\xi_u$ with $u\in\CS$, and every such
factorization is essentially unique, in the sense that if
$f_1,f_2:A\to X(u)$ with $\xi_uf_1=g=\xi_uf_2$ then there is
$\sigma:u\to t$ a map in $\CS$ such that
$X(\sigma)f_1=X(\sigma)f_2$. The object $A\in\CA$ is called
{\em$\lambda$--presentable} if $\CA(A,-)$ preserves all
$\lambda$--filtered colimits. The category $\CA$ is called {\em
locally $\lambda$--presentable} provided that it is cocomplete,
and has a set $\CS$ of $\lambda$--presentable objects such that
every $X\in\CA$ is a $\lambda$--filtered colimit of objects in
$\CS$ (see \cite[Definition 1.17]{AR}, but also \cite[Remark
1.21]{AR})). Note that, if $\CA$ is locally
$\lambda$--presentable, then the subcategory $\CA^\lambda$ of all
$\lambda$--presentable objects in $\CA$ is essentially small, and
for every object $A\in\CA$, the category $\CA^\lambda/A$ is
$\lambda$--filtered and
\[A\cong\colim_{(X,\xi)\in\CA^\lambda/A}X,\] as we may see from
\cite[Proposition 1.22]{AR}. A category is called {\em locally
presentable} if it is locally $\lambda$--presentable for some
regular cardinal $\lambda$.

\begin{remark}\label{fsaft} Let $\CA$ be a locally $\lambda$--presentable
category. Observe than the category $\CA\opp$ satisfies the hypotheses of Freyd's special adjoint functor theorem:
it is well powered, complete and has a cogenerator
(since the coproduct of all $\lambda$-presentable objects is a generator for $\CA$).
In particular, every contravariant functor $F:\CA\to\Ab$ which
sends colimits into limits is representable. Indeed, we can view $F$ as a covariant functor  
$\CA^\opp\to\Ab$ which must be representable, having a left adjoint.
Let us write $F\cong\CA(-,A)$, for some $A\in\CA$. Thus the categories $\CA^\lambda/A$ and
$\CA^\lambda/F$ are isomorphic, so
\[F\cong\CA(-,\colim_{(X,x)\in\CA^\lambda/F}X).\]
\end{remark}

We consider a category $\CA$ which is
a union \[\CA=\bigcup_{\lambda\in\GR}\CA_\lambda,\] of a chain of subcategories 
$\{\CA_\lambda\mid\lambda\in\GR\}$
such that $\CA_\kappa\subseteq\CA_\lambda$ for all $\kappa\leq\lambda$ and the
subcategory $\CA_\lambda$ locally $\lambda$--presentable and
closed under colimits in $\CA$, for any $\lambda\in\GR$.  
Denote by $I_\lambda:\CA_\lambda\to\CA$ the
inclusion functor, which preserves colimits by our assumption.
Note that by Freyd's special adjoint functor theorem, the
subcategory $\CA_\lambda$ is coreflective, that is $I_\lambda$ has
a right adjoint $R_\lambda:\CA\to\CA_\lambda$. We call {\em quasi--locally presentable} 
a category $\CA$ as above satisfying the additional propperty that $R_\lambda$ preserves colimits 
for all $\lambda\in\GR$. For such a
quasi--locally presentable category $\CA$ and a regular cardinal
$\lambda$ we denote by $\CA_\lambda^\lambda$ the subcategory of
all $\lambda$--presentable objects of $\CA_\lambda$, which has to
be skeletally small. 

\begin{lemma}\label{akkinall} In a quasi--locally presentable category $\CA$ it holds 
$\CA_\kappa^\kappa\subseteq\CA_\lambda^\lambda$, for every  $\kappa\leq\lambda$.
\end{lemma}

\begin{proof}
With the notations above, fix two cardinals $\kappa\leq\lambda$.
Observe that if we denote $I_{\kappa,\lambda}:\CA_\kappa\to\CA_\lambda$ the inclusion functor, 
then it has a right adjoint namely $R_{\kappa,\lambda}=R_\kappa I_\lambda$.
Since $R_\kappa$ preserves colimits, $R_{\kappa,\lambda}$ 
satisfies the same property. Then for $A\in\CA_\kappa^\kappa$ and 
for a $\lambda$-filtered (hence also $\kappa$-filtered) diagram $(X_i)_{i\in I}$ in 
$\CA_\lambda$ we have the following chain of isomorphisms, showing that 
$I_{\kappa,\lambda}(A)$ is $\lambda$-presentable:

\begin{align*}
 \CA_\lambda(I_{\kappa,\lambda}(A),\colim X_i)&\cong\CA_\kappa(A,R_{\kappa,\lambda}(\colim X_i))
 \cong\CA_\kappa(A,\colim R_{\kappa,\lambda}(X_i))\\
  &\cong\colim\CA_\kappa(A,R_{\kappa,\lambda}(X_i))\cong\colim\CA_\lambda(I_{\kappa,\lambda}(A),X_i).
\end{align*}
\end{proof}

As an example of quasi--locally presentable categories we mention
first the classical locally presentable ones. Clearly if $\CA$ is
locally $\kappa$--presentable for some regular cardinal $\kappa$,
then it is also quasi--locally presentable, for all regular cardinals $\lambda$ putting
$\CA_\lambda=\CA$, if $\lambda\geq\kappa$, and $\CA_\lambda=0$ otherwise.

\begin{lemma}\label{fiscolimit}
Let $F:\CA\to\Ab$ be a contravariant functor which sends colimits
into limits, defined on a quasi--locally presentable, abelian
category $\CA$. Then for every regular cardinal $\kappa$, there is
$\lambda\in\GR$, $\lambda\geq\kappa$ such that
\[FI_\kappa\cong\colim_{(X,x)\in\CA_\lambda^\lambda/F}\CA(I_\kappa(-),X).\]
\end{lemma}

\begin{proof} For any $\lambda\in\GR$, consider the
corresponding coreflective locally $\lambda$--presentable
subcategory $I_\lambda:\CA_\lambda\leftrightarrows\CA:R_\lambda$.

Fix $\kappa\in\GR$. For a skeleton $\CC_0$ of $\CA_\kappa^\kappa$,
denote $C_0=\coprod_{(U,u)\in\CC_0/F}U$. Let $\lambda$ be a
regular cardinal such that
\[\lambda>\kappa+\card\CC_0+\sum_{U\in\CC_0}\card
F(U)+\sum_{U\in\CC_0}\card\CA(U,C_0)+\aleph_1.\]

Since  $F:\CA\to\Ab$ sends colimits into limits, the same property
is also true for $FI_\lambda:\CA_\lambda\to\Ab$. By Remark
\ref{fsaft} we obtain $FI_\lambda\cong\CA_\lambda(-,F_\lambda)$
for some $F_\lambda\in\CA_\lambda$ satisfying
\[F_\lambda=\colim_{(X,x)\in\CA_\lambda^\lambda/F}X=\colim_{(X,\xi)\in\CA_\lambda^\lambda/F_\lambda}X,\]
with the canonical maps $\gamma_{(X,x)}:X\to F_\lambda$. Note that $\gamma_{(X,x)}$ 
is the image of $(X,x)$ via the isomorphism of categories 
$\CA^\lambda_\lambda/FI\lambda\stackrel{\cong}\longrightarrow\CA^\lambda_\lambda/F_\lambda$.
 We have to show that
\[F(A)\cong\colim_{(X,x)\in\CA_\lambda^\lambda/F}\CA\left(A,X\right),\]
for all $A\in\CA_\kappa$. Since $A=I_\kappa(A)=I_\lambda(A)$ this
means precisely that $\CA(A,-)$ preservers the colimit of the diagram
$\CA_\lambda^\lambda/F\to\CA,\ (X,x)\mapsto X$. In order to prove
this, consider in the first step that $A$ is a coproduct of
objects in $\CA_\kappa^\kappa$. Without losing the generality we
may assume that $A=\coprod_{i\in I}U_i$, for some set $I$, and
some $U_i\in\CC_0$. Denote by $j_i:U_i\to A,\ (i\in I)$ the
canonical injections. Let $g:A\to F_\lambda$ be a map in $\CA$.
Since for all $U\in\CC_0$ we have
$U\in\CA_\kappa\subseteq\CA_\lambda$, we may identify $\CC_0/F$
with $\CC_0/F_\lambda$ thus
$C_0=\coprod_{(U,\upsilon)\in\CC_0/F_\lambda}U$ with the canonical
injections $\epsilon_{(U,\upsilon)}:U\to C_0$. Since
$gj_i\in\CA(U_i,F_\lambda)$ we get a unique $f:A\to C_0$, such
that $fj_i=\epsilon_{(U_i,gj_i)}$ from the universal property of
the coproduct. Put
$c_0=(\upsilon)_{(U,\upsilon)\in\CC_0/F_\lambda}$. We know by Lemma \ref{akkinall} 
that $\CA_\kappa^\kappa\subseteq\CA_\lambda^\lambda$, so the condition 
$\lambda>\sum_{U\in\CC_0}\card F(U)$ assures us that
$(C_0,c_0)\in\CA^\lambda_\lambda/F_\lambda$. It follows
$(C_0,c_0)\in\CA_\lambda^\lambda/F$. Moreover by
construction $\gamma_{(C_0,c_0)}f=g$, so $g$ factors through
$\gamma_{(C_0,c_0)}$.

It remains to show that this factorization is essentially unique.
Consider therefore two maps $f_1,f_2:A\to C_0$ such that
$\gamma_{(C_0,c_0)}f_1=g=\gamma_{(C_0,c_0)}f_2$. Denote
$\CN=\{(U,h)\mid U\in\CC_0,h\in\CA(U,C_0)\hbox{ with
}\gamma_{(C_0,c_0)}h=0\}$, where $\CC$ is a skeleton of
$\CA_\lambda^\lambda$, and put $C_1=\coprod_{(U,h)\in\CN}U$ with
the canonical injections $k_{(U,h)}:U\to C_1$. By the choice of
$\lambda$ we have $\lambda>\card\CA(U,C_0)\geq\card\CN$, hence
$(C_1,0)\in\CA^\lambda_\lambda/F$. We may even consider
$(C_1,0)\in\CC/F$. We have $(U_i,(f_1-f_2)j_i)\in\CN$, hence there
is a unique $\theta:A\to C_1$ such that $\theta
j_i=k_{(U_i,(f_1-f_2)j_i)}$ for all $i\in I$. Further there is a
unique morphism $\eta:C_1\to C_0$ such that $\eta
k_{(U,h)}=h$ for all $(U,h)\in\CN$. Clearly $\eta$
is a map in $\CA_\lambda^\lambda/F$ between $(C_1,0)$ and
$(C_0,c_0)$. If $C$ is defined by the exactness of the sequence
$C_1\stackrel{\eta}\longrightarrow
C_0\stackrel{\delta}\longrightarrow C\to 0$, then
$C\in\CA_\lambda^\lambda$, because $\CA_\lambda^\lambda$ is closed
under cokernels (see \cite[Proposition 1.16]{AR}). Since $F$ sends
cokernels into kernels, we infer that there is $c\in F(C)$ such
that $F(\delta)(c)=c_0$. Thus $\delta:(C_0,c_0)\to(C,c)$ lies in
$\CA_\lambda^\lambda/F$, and $\delta(f_1-f_2)=\delta\eta\theta=0$,
finishing the proof of the first step above.

Finally an arbitrary $A\in\CA_\kappa$ is a colimit of objects in
$\CA_\kappa^\kappa$, so it is a cokernel of the form $A_1\to
A_0\to A\to 0$ with $A_1$ and $A_0$ being coproducts of objects in
$\CA_\kappa^\kappa$. Using the first step before, we get easily
\[F(A)\cong\CA(A,F_\lambda)\cong\colim_{(X,x)\in\CA_\lambda^\lambda/F}\CA(A,X)\]
canonically.
\end{proof}

\begin{remark}\label{franke}
With the notations made in Lemma \ref{fiscolimit} and its proof,
the argument used to show the fact that
$\CA(A,F_\lambda)\cong\colim_{(X,x)\in\CA_\lambda^\lambda/F}\CA(A,X)$,
for $A=\coprod_{i\in I}U_i$, with $U_i\in\CA_\kappa^\kappa$ is
inspired by \cite[Lemma 2.11]{F}. However, we didn't only change
the settings, but we also improved the proof of Franke. A simple
translation of his argument in our settings would require the
condition $\card\CA(U,X)\leq\lambda$ for all
$U\in\CA_\kappa^\kappa$ and all $X\in\CA^\lambda_\lambda$. A
priori is not clear how we may choose such a regular cardinal
$\lambda$. Instead this, we required
$\sum_{U\in\CC_0}\card\CA(U,C_0)<\lambda$, where the left hand
side of this inequality doesn't depend of $\lambda$.
\end{remark}

Recall that we call {\em cofinal} a subcategory $\CS$ of a
category $\CC$ satisfying the following two properties: For every
$c\in\CC$ there is a map $c\to s$ in $\CC$ for some $s\in\CS$; and
for any two maps $c\to s_1$ and $c\to s_2$ in $\CC$, with
$s_1,s_2\in\CS$ there are $s\in\CS$ and two maps $s_1\to s$ and
$s_2\to s$ in $\CS$ such that the composed morphisms $c\to s_1\to
s$ and $c\to s_2\to s$ are equal.  It is well--known that if $\CS$
is a cofinal subcategory of $\CC$, then colimits over $\CC$ and
colimits over $\CS$ coincide (see \cite[0.11]{AR}).

\begin{lemma}\label{cws} Let $\CA$ be an abelian category, and let
$F:\CA\to\Ab$ be a contravariant, exact functor. Let
$\CC\subseteq\CA$ be a subcategory closed under finite coproducts
and cokernels. If $\CS$ is a subcategory of $\CC$ closed under
finite coproducts and satisfying the property that every $X\in\CC$
admits an embedding $0\to X\to S$ into an object in $\CS$, then
$\CS/F$ is a cofinal subcategory of $\CC/F$.
\end{lemma}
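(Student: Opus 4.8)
The plan is to verify the two defining conditions of a cofinal subcategory for $\CS/F \subseteq \CC/F$ directly, using exactness of $F$ together with the embedding and closure hypotheses. First I would check the ``existence of a cocone object'' condition: given an object $(X,x) \in \CC/F$, i.e.\ $X \in \CC$ and $x \in F(X)$, I would pick an embedding $\iota\colon 0 \to X \to S$ with $S \in \CS$, which exists by hypothesis. Since $F$ is exact and contravariant, it sends the monomorphism $\iota$ to an epimorphism $F(\iota)\colon F(S) \to F(X)$ of abelian groups, so there is some $s \in F(S)$ with $F(\iota)(s) = x$. Then $(S,s) \in \CS/F$ and $\iota\colon (X,x) \to (S,s)$ is a morphism in $\CC/F$, giving the first condition.

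Next I would handle the ``coherence'' condition: suppose we are given two morphisms $\alpha_1\colon (X,x) \to (S_1,s_1)$ and $\alpha_2\colon (X,x) \to (S_2,s_2)$ in $\CC/F$ with $S_1, S_2 \in \CS$. The idea is to form the coproduct $S_1 \oplus S_2$, which lies in $\CS$ by closure under finite coproducts, together with the map $(\alpha_1,\alpha_2)^{T}\colon X \to S_1 \oplus S_2$; but this need not be monic, so I would then embed its cokernel's relevant quotient — more precisely, factor appropriately. The cleanest route: let $\beta = \binom{\alpha_1}{-\alpha_2}\colon X \to S_1 \oplus S_2$, form the cokernel $q\colon S_1 \oplus S_2 \to C$ in $\CA$; since $\CC$ is closed under finite coproducts and cokernels, $C \in \CC$, so there is an embedding $j\colon C \to S$ with $S \in \CS$. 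Set $\gamma_i = j \circ q \circ \mathrm{incl}_i \colon S_i \to S$. Then $\gamma_1 \alpha_1 = \gamma_2 \alpha_2$ because $q\beta = 0$, i.e.\ $q\,\mathrm{incl}_1\,\alpha_1 = q\,\mathrm{incl}_2\,\alpha_2$. It remains to produce the element $t \in F(S)$ making $\gamma_i$ into morphisms of $\CC/F$: apply $F$ to the composite $S_i \xrightarrow{\gamma_i} S$; one checks $F(\gamma_1)$ and $F(\gamma_2)$ must agree on a suitable preimage, and exactness of $F$ (applied to the mono $j$, giving $F(j)$ epi) lets us lift $s_i$ through $F(j)$ compatibly. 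Actually the right way is: since $\gamma_i\alpha_i$ is independent of $i$, the pair $(s_1, s_2)$ defines an element of $F(S_1\oplus S_2) \cong F(S_1) \times F(S_2)$ whose image under $F(\mathrm{incl}_i)$ is $s_i$; one needs an element of $F(S)$ restricting correctly, which is where exactness of $F$ on the mono $j$ and on the exact sequence built from $\beta$ and $q$ does the work.

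The main obstacle I anticipate is precisely this last bookkeeping step: choosing $t \in F(S)$ so that \emph{both} $F(\gamma_1)(t) = s_1$ and $F(\gamma_2)(t) = s_2$ simultaneously, rather than just one of them. The naive choice of a preimage of $s_1$ under $F(\gamma_1)$ need not be a preimage of $s_2$ under $F(\gamma_2)$. The resolution should come from applying $F$ to the exact sequence $X \xrightarrow{\beta} S_1 \oplus S_2 \xrightarrow{q} C \to 0$ (and possibly extending it on the left), which under the exact contravariant $F$ becomes $0 \to F(C) \xrightarrow{F(q)} F(S_1 \oplus S_2) \xrightarrow{F(\beta)} F(X)$; since the element $(s_1, s_2) \in F(S_1) \times F(S_2) = F(S_1 \oplus S_2)$ satisfies $F(\beta)(s_1,s_2) = F(\alpha_1)(s_1) - F(\alpha_2)(s_2) = x - x = 0$, it lies in the image of $F(q)$, say $(s_1,s_2) = F(q)(c)$ for a unique $c \in F(C)$; then, using that $F(j)\colon F(S) \to F(C)$ is epi (as $j$ is a mono and $F$ is exact), pick $t \in F(S)$ with $F(j)(t) = c$, and verify $F(\gamma_i)(t) = F(\mathrm{incl}_i)F(q)F(j)(t) = F(\mathrm{incl}_i)F(q)(c) = F(\mathrm{incl}_i)(s_1,s_2) = s_i$. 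This makes $(S,t) \in \CS/F$ and $\gamma_i \colon (S_i, s_i) \to (S,t)$ morphisms in $\CS/F$ with the required equality, completing the verification of cofinality.
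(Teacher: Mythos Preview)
Your proposal is correct and follows essentially the same route as the paper's proof: both verify the first cofinality condition by embedding $X$ into some $S\in\CS$ and using that $F$ turns monomorphisms into epimorphisms, and both handle the second condition by forming the map $\rho_1\alpha_1-\rho_2\alpha_2\colon X\to S_1\oplus S_2$, taking its cokernel $C\in\CC$, embedding $C$ into some $S\in\CS$, and using exactness of $F$ on the resulting sequence to produce the compatible element. The only organizational difference is that the paper isolates the step ``if $F(\alpha)(x_2)=0$ then there is $\gamma\colon(X_2,x_2)\to(S,y)$ in $\CS/F$ with $\gamma\alpha=0$'' as a separate claim before applying it to $\alpha=\rho_1\alpha_1-\rho_2\alpha_2$, whereas you carry out the same computation inline.
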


\begin{proof}
Let $(X,x)\in\CC/F$. Consider an embedding  $0\to
X\stackrel{\alpha}\to S$, with $S\in\CS$. Thus
$F(S)\stackrel{F(\alpha)}\longrightarrow F(X)\to 0$ is exact,
showing that there exists $y\in F(S)$ with $F(\alpha)(y)=x$.
Therefore $\alpha$ is a map in $\CC/F$ between $(X,x)$ and
$(S,y)$.

Now we claim that if $\alpha:X_1\to X_2$ is a map in $\CC$, and
$x_2\in F(X_2)$ is an element with the property
$F(\alpha)(x_2)=0$, then there is a morphism
$\gamma\in\CC/F((X_2,x_2),(S,y))$ into an object $(S,y)\in\CS/F$
such that $\gamma\alpha=0$. Indeed consider $X$ being defined by
exact sequence $X_1\stackrel{\alpha}\to X_2\stackrel{\beta}\to
X\to 0$. Since the sequence of abelian groups $0\to
F(X)\stackrel{F(\beta)}\to F(X_2)\stackrel{F(\alpha)}\to F(X_1)$
is also exact and $F(\alpha)(x_2)=0$, we obtain an element $x\in
F(X)$ such that $F(\beta)(x)=x_2$. For obtaining the required
$\gamma$, compose $\beta$ with a morphism in $\CC/F$ from $(X,x)$
into an object $(S,y)$, which is constructed as in the first part
of this proof.

Finally for two morphisms
\[\alpha_1\in\CC/F((X,x),(S_1,y_1))\hbox{ and }
\alpha_2\in\CC/F((X,x),(S_2,y_2)),\] denote by $\rho_1$ and
$\rho_2$ the respective injections of the coproduct $S_1\amalg
S_2$. Then $F(\rho_1\alpha_1-\rho_2\alpha_2)(y_1,y_2)=x-x=0$, so
our claim for $\alpha=\rho_1\alpha_1-\rho_2\alpha_2$ gives a
morphism $(S_1\amalg S_2, (y_1,y_2))\to (S,y)$ in $\CC/F$, with
$S\in\CS$, such that the composed morphisms $X\to S_1\to S_1\amalg
S_2\to S$ and $X\to S_2\to S_2\amalg S_2\to S$ are equal.
\end{proof}

Let $\kappa\in\GR$. As usually, a {\em $\kappa$--(co)product}
means a (co)product of less that $\kappa$ objects. We say that a
quasi--locally presentable abelian category $\CA$ is {\em weakly
$\kappa$--generated} if $\CA$ coincide with its smallest full
subcategory containing $\CA_\kappa$ and being closed under
kernels, cokernels, extensions and $\kappa$--coproducts. We also
need the following notation:
\[\Inj_\lambda\CA=\{S\in\CA\mid S\hbox{ is injective and
}S\in\CA_\lambda^\lambda\}.\]

\begin{theorem}\label{represent}
Let $\CA$ be a quasi--locally presentable, abelian category which
is weakly $\kappa$--generated, for some regular cardinal $\kappa$.
Suppose also that, for any regular cardinal $\lambda\geq\kappa$,
every $X\in\CA_{\lambda}^{\lambda}$ admits an embedding $0\to X\to
S$ into an object $S\in\Inj_\lambda\CA$. Then every exact,
contravariant functor $F:\CA\to\Ab$ which sends coproducts into
products is representable (necessarily by an injective object).
\end{theorem}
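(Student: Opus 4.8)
The strategy is to reduce the representability of $F$ on the huge category $\CA$ to representability of a restricted functor on one of the locally presentable pieces $\CA_\lambda$, where Freyd's special adjoint functor theorem applies. Concretely, I would argue as follows. Since $F$ sends coproducts into products and is exact, it sends colimits into limits (every colimit is built from coproducts and cokernels). Composing with each inclusion $I_\lambda:\CA_\lambda\to\CA$, which preserves colimits, we get exact functors $F_\lambda := F\circ I_\lambda : \CA_\lambda\to\Ab$ that send colimits into limits. By Freyd's special adjoint functor theorem, each $F_\lambda$ is representable, say $F_\lambda\cong\CA_\lambda(-,A_\lambda)$ for some $A_\lambda\in\CA_\lambda$. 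The object $A_\lambda$ is injective in $\CA_\lambda$ because $F_\lambda$ is exact, i.e.\ $\CA_\lambda(-,A_\lambda)$ is exact. Moreover, using the coreflection $R_\lambda\dashv I_\lambda$, the functor $\CA_\lambda(-,A_\lambda)\cong\CA_\lambda(R_\lambda(-),A_\lambda)|_{\CA_\lambda}$; one checks that $A_\lambda$ actually represents $F$ on the whole of $\CA$ provided $F(X)$ only ``sees'' the coreflection, and the goal becomes to show that for $\lambda$ large enough the putative representing object $A_\lambda$ lies in $\CA$ itself (it automatically does, being in $\CA_\lambda\subseteq\CA$) and genuinely represents $F$ on all of $\CA$.

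**Passing from pieces to the whole.** The key point is compatibility: if $\kappa\le\lambda\le\mu$, then $\CA_\lambda\subseteq\CA_\mu$ and by uniqueness of representing objects, $R_\lambda(A_\mu)\cong A_\lambda$, so the $A_\lambda$ form a coherent system. To show $F\cong\CA(-,A_\mu)$ for suitable $\mu$, take an arbitrary $X\in\CA$. By the weak $\kappa$-generation hypothesis, $X$ is obtained from objects of $\CA_\kappa$ by kernels, cokernels, extensions and countable coproducts; since each such construction applied to objects of $\CA_\lambda$ stays in $\CA_\lambda$ once $\lambda$ is large enough to contain the relevant objects — and any single object $X$ is built from a \emph{set} of objects of $\CA_\kappa$, hence from a set of $\kappa$-presentable generators, hence lies in $\CA_\lambda$ for all $\lambda$ past some bound depending on $X$ — we conclude $X\in\CA_\lambda$. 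Then $F(X)=F_\lambda(X)\cong\CA_\lambda(X,A_\lambda)=\CA(X,A_\lambda)$. But representability needs a single fixed object, so here is where the embedding hypothesis into $\Inj_\lambda\CA$ enters: I would use Lemma \ref{cws} with $\CC=\CA_\lambda^\lambda$ and $\CS=\Inj_\lambda\CA$ to show that, for the functor $F_\lambda$, the category of elements $\CS/F_\lambda$ is cofinal in $\CA_\lambda^\lambda/F_\lambda$, so that $A_\lambda\cong\colim_{(S,s)\in\Inj_\lambda\CA/F}S$ — a colimit over injective objects drawn from a \emph{fixed} essentially small index class $\Inj_\lambda\CA\subseteq\Inj_\kappa\CA$ stabilizing as $\lambda$ grows. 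One shows these colimits stabilize, i.e.\ $A_\lambda$ is independent of $\lambda$ for $\lambda\ge$ some $\mu_0$, because the diagram $\Inj_\lambda\CA/F$ does not grow: every object of $\Inj_\lambda\CA$ already appears in $\Inj_\kappa\CA$ (injectives that are $\lambda$-presentable in $\CA_\lambda$), and the maps into $F$ are the same. Call the common value $A$. Then $A\in\CA_\lambda$ for all large $\lambda$, so $A\in\CA$, and for every $X$ we have $F(X)\cong\CA(X,A)$ naturally, with naturality squares checked on $\CA_\lambda$ for $\lambda$ large enough to contain both the source and target, giving $F\cong\CA(-,A)$.

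**Injectivity and the main obstacle.** That $A$ is injective in $\CA$ follows because $F\cong\CA(-,A)$ is exact by hypothesis. The main obstacle is the stabilization step: proving that the representing objects $A_\lambda$, or equivalently the colimits $\colim_{\Inj_\lambda\CA/F}S$, do not depend on $\lambda$ once $\lambda$ is sufficiently large. This requires showing that $\Inj_\lambda\CA$ — the injective objects that are $\lambda$-presentable in $\CA_\lambda$ — forms an increasing chain whose union is controlled, together with the claim that the embedding hypothesis makes $\Inj_\lambda\CA/F$ cofinal in $\CA_\lambda^\lambda/F_\lambda$ uniformly, so that the colimit formula for $A_\lambda$ is computed over a diagram that eventually becomes constant up to cofinality. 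The delicate bookkeeping is ensuring an object $X\in\CA_\mu^\mu$ that gets built at stage $\mu>\lambda$ doesn't produce new maps $X\to A$ missed at stage $\lambda$; this is precisely where weak $\kappa$-generation and closure of $\CA_\lambda$ under colimits in $\CA$ combine to confine everything to a cofinal subsystem of fixed injectives. Once stabilization is in hand, the rest is the routine verification that $A$ represents $F$ and is injective.
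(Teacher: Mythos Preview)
Your opening reduction---restrict $F$ along each $I_\lambda$, apply Freyd's special adjoint functor theorem on the locally presentable $\CA_\lambda$ to get a representing object $A_\lambda$, and write $A_\lambda\cong\colim_{(X,x)\in\CA_\lambda^\lambda/F}X$---matches the paper. The gap is the ``stabilization'' step. Your justification, that ``every object of $\Inj_\lambda\CA$ already appears in $\Inj_\kappa\CA$'', is false: $\Inj_\lambda\CA$ consists of injectives that are $\lambda$-presentable in $\CA_\lambda$, and this class \emph{grows} with $\lambda$ (in the intended application $\CA=\A(\CT)$ it is essentially $\{H(x):x\in\CT^\lambda\}$, and $\CT^\lambda$ strictly increases). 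So the diagrams $\Inj_\lambda\CA/F$ genuinely enlarge, and there is no formal reason the colimits $A_\lambda$ eventually stop moving. Example~\ref{norep} exhibits a limit-preserving but non-exact $F$ on a quasi--locally presentable $\CA$ whose $A_\lambda$ never stabilize; a valid stabilization argument would therefore have to use exactness of $F$ in an essential way, and yours does not. (A secondary issue: $\CA_\lambda$ is only assumed closed under \emph{colimits} in $\CA$, so kernels and extensions need not remain in $\CA_\lambda$; your claim that the weak-$\kappa$-generation operations stay inside a fixed $\CA_\lambda$ is also unsupported.)

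The paper does not try to stabilize. It fixes \emph{one} regular $\lambda$, chosen in terms of $F$: with $\CC_0$ a skeleton of $\CA_\kappa^\kappa$ and $C_0=\coprod_{(U,u)\in\CC_0/F}U$, take $\lambda$ dominating $\kappa$, $\aleph_1$, $\sum_{U\in\CC_0}\card F(U)$ and $\sum_{U\in\CC_0}\card\CA(U,C_0)$. For this $\lambda$, a Franke-style factorization argument shows that for any coproduct $A=\coprod_iU_i$ with $U_i\in\CC_0$, every map $A\to F_\lambda$ factors essentially uniquely through the single object $C_0\in\CA_\lambda^\lambda$; a cokernel presentation extends this to all $A\in\CA_\kappa$, yielding a natural isomorphism $\phi_A:\colim_{(X,x)\in\CA_\lambda^\lambda/F}\CA(A,X)\to F(A)$ on $\CA_\kappa$. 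Lemma~\ref{cws} is then used not to bound the index category but to replace it by the cofinal $\Inj_\lambda\CA/F$, so that the left-hand side is a $\lambda$-filtered colimit of \emph{exact} functors $\CA(-,S)$ with $S$ injective in $\CA$, hence is itself exact and commutes with countable coproducts. Weak $\kappa$-generation now propagates the isomorphism $\phi$ from $\CA_\kappa$ to all of $\CA$; this exhibits the set $\CA_\lambda^\lambda$ as a solution set for $F$, and the \emph{general} Freyd adjoint functor theorem finishes. The missing idea in your sketch is precisely this cardinal-arithmetic choice of $\lambda$ depending on $F$.
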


\begin{proof} Fix a contravariant exact functor $F:\CA\to\Ab$, which sends
coproducts into products. Consider the obvious natural transformation
\[\phi:\colim_{(X,x)\in\CA^\lambda_\lambda/F}\CA(-,X)\to F.\]
Since $F$ sends colimits into limits, Lemma \ref{fiscolimit}
applies and tells us that there is $\lambda\in\GR$,
$\lambda\geq\kappa$ such that $\phi$ restricts to an isomorphism:
\[\colim_{(X,x)\in\CA^\lambda_\lambda/F}\CA(I_\kappa(-),X)\cong FI_\kappa.\]
We know that $\CA^\lambda_\lambda/F$ is $\lambda$--filtered (see
\cite[Korollar 5.4]{GU}), hence colimits of abelian groups indexed
over this category are exact and commute with products of less
that $\lambda$ objects (see \cite[Satz 5.2]{GU}). Since every
$X\in\CA_\lambda^\lambda$ admits an embedding in an object
$S\in\Inj_\lambda\CA$, we deduce by Lemma \ref{cws} that
$\Inj_\lambda\CA/F$ is a cofinal subcategory of
$\CA^\lambda_\lambda/F$, so
\[\colim_{(X,x)\in\CA^\lambda_\lambda/F}\CA(-,X)\cong\colim_{(S,s)\in\Inj_\lambda\CA/F}\CA(-,S)\]
is an exact functor. We infer that the full subcategory of $\CA$
consisting of all objects $A$ for which $\phi_A$ is an isomorphism
contains $\CA_\kappa$ and is closed under kernels, cokernels,
extensions and $\kappa$-coproducts (since $\lambda\geq\kappa$).
Therefore it is equal to $\CA$ forced by the hypothesis of weak
$\kappa$--generation. This means that $\phi$ is a natural
isomorphism, hence a skeleton of $\CA_\lambda^\lambda$ forms a
solution set for $F$. We conclude that $F$ is representable by the
general Freyd's adjoint functor theorem.
\end{proof}

\begin{example}\label{locG}
The following example shows that the conclusion of Theorem
\ref{represent} requires a kind of weak generation.

Recall that an abelian category is called
{\em locally Grothendieck} if every set of objects may be included
in subcategory which is Grothendieck (see \cite{T}). 
Let $K$ be a field. The category
$\CA=\bigcup_{\lambda\in\GR}\Md(K^\lambda)$ considered in \cite{T}
is locally Grothendieck. Here by $\Md(K^\lambda)$ we denote the
category of right modules over the ring $K^\lambda$. Moreover, the category
$\CA$ is also quasi--locally presentable. Indeed it is a a big union of
a chain of Grothendieck (hence locally presentable) subcategories
$\CA_\lambda=\Md(K^\lambda)$. For all $\kappa\leq\lambda$
in $\GR$ we have $K^\kappa=K^\lambda e$, where
$e=e({\kappa,\lambda})\in K^\lambda$ is a central idempotent
defined by $e_{\gamma}=1$ for $\gamma\leq\kappa$ and $0$
otherwise. Thus $K^\kappa$ is a direct summand of $K^\lambda$, and
all $X\in\Md(K^\lambda)$ decomposes as $X=Xe\oplus X(1-e)$.
Moreover for $X,Y\in\Md(K^\lambda)$ there is no nonzero
homomorphisms between $Xe$ and $Y(1-e)$, hence we have
\[\Hom_{K^\lambda}(X,Y)=\Hom_{K^\kappa}(Xe,Ye)\oplus\Hom_{K^\lambda(1-e)}(X(1-e),Y(1-e)).\]
Thus we can see $\Md(K^\kappa)$ as a full split subcategory of
$\Md(K^\lambda)$. 
We deduce that for every fixed $\kappa\in\GR$ and for every $X\in\CA$, there is $\lambda\geq\kappa$
such that $X\in\Md(K^\lambda)$. The assignment $X\mapsto Xe$, where
$e=e(\kappa,\lambda)$ induces a well defined functor $R_\kappa:\CA\to\Md(K^\kappa)$ which is both
the left and the right adjoint of the inclusion functor $I_\kappa$; this follows by the fact that
$\Md(K^\kappa)$ is a full split subcategory of $\Md(K^\lambda)$. Thus both the inclusion functor
$\Md(K^\kappa)$ and its right adjoint preserve colimits. 

Using an idea from \cite{MS} we may construct a non--representable
exact contravariant functor $F:\CA\to\Ab$, which sends coproducts
into products. For every $\lambda\in\GR$, denote by $\lambda^+$
the successor of $\lambda$ and consider $Q_{\lambda^+}$ to be an
injective cogenerator of $\Md(K^{\lambda^+})$. The
$K^{\lambda^+}$-module $Y_\lambda=Q_{\lambda^+}(1-e)$, where
$e=e(\lambda,\lambda^+)$, is injective and satisfies
$\Hom_{K^{\lambda^+}}(X,Y_\lambda)=0$ for all
$X\in\Md(K^\lambda)$. The contravariant functor
\[F:\CA\to\Ab, F(X)=\prod_{\lambda\in\GR}\CA(X,Y_{\lambda})\] is
well defined. In fact, for $X\in\Md(K^\kappa)$, we have
$\CA(X,Y_\lambda)=0$ if $\lambda\geq\kappa$, hence
$F(X)=\prod_{\lambda<\kappa}\CA(X,Y_{\lambda})$. Obviously $F$ is
exact and sends coproducts into products. But $F$ is not
representable, since the strict inclusion of $\Md(K^\lambda)$ into
$\Md(K^{\lambda^+})$ implies that the cogenerator $Q_{\lambda^+}$
must contain a nonzero part $Y_\lambda$ in
$\Md(K^{\lambda^+}(1-e))$. The representability of $F$ would means
the existence of the product $Y=\prod_{\lambda\in\GR}Y_\lambda$ in
$\CA$. But this is absurd since $Y$ would have a proper class of
endomorphisms, and such objects don't exist in $\CA$. Notice that
the category $\bigcup_{\lambda\in\GR}\Md(K^\lambda)$ was used in
\cite{T} as example of a category for which the $\lambda$-pure
global dimension is greater that 1, for all $\lambda\in\GR$;
both this example and our present work have connections with Brown
representability. On the other hand we have:

\begin{proposition}
Consider the above locally Grothendieck category \[\CA=\bigcup_{\lambda\in\GR}\Md(K^\lambda).\]
A contravariant functor $F:\CA\to\Ab$ is representable if and only if it sends colimits
into limits and there is $\kappa\in\GR$ such that $F\cong FI_\kappa R_\kappa$.
\end{proposition}

\begin{proof} 
If $F\cong\CA(-,Y)$ for some $Y\in\CA$ then there is $\kappa\in\GR$ such that
$Y\in\Md(K^\kappa)$. Thus for every $X\in\CA$, there is $\lambda\geq\kappa$ such that $X\in\Md(K^\lambda)$, hence
$F(X)=\CA(X,Y)\cong\CA(Xe,Y)\cong FI_\kappa R_\kappa(X).$

Conversely if $F$ sends colimits into limits then, as in the proof of Lemma \ref{fiscolimit},
we obtain $FI_\kappa\cong\Hom_{K^\kappa}(-,Y)$, for some $Y\in\Md(K^\kappa)$. Combining this
with $F\cong FI_\kappa R_\kappa$ we deduce:
\[F\cong\Hom_{K^\kappa}(R_\kappa(-),Y)\cong\CA(-,I_\kappa(Y)),\] therefore $F$ is representable.
\end{proof}
\end{example}

\begin{example}\label{norep} In Theorem \ref{represent} the
exactness of the functor $F:\CA\to\Ab$ (which sends coproducts
into products) is an essential hypothesis. More precisely, the
weaker requirement that $F$ sends colimits into limits is not
sufficient to conclude that it is representable.  For showing this,
suppose that the quasi--locally presentable category $\CA$ from
the Theorem \ref{represent} is abelian (as in the motivating case of the next Section) 
but is not locally presentable, that is
$\CA\neq\CA_\lambda$ for every $\lambda\in\GR$. The fact that
$\CA$ is weakly generated which is used in combination with the
exactness of $F$ doesn't play any role in this example. The exactness of 
$R_\lambda$ implies that  $\CA_\lambda$ is equivalent to quotient category of $\CA$ 
modulo the Serre subcategory $\Ker R_\lambda=\{X\in\CA\mid R_\lambda(X)=0\}$. But $R_\lambda$  
is not an equivalence, forcing $\Ker R_\lambda\neq0$. Consider $0\neq
X_\lambda\in\CA$ such that $R_\lambda(X_\lambda)=0$, for every
$\lambda\in\GR$. Strictly speaking we need here a version of axiom
of choice which works for proper classes.  As in Example \ref{locG},
we infer that the functor
\[F=\prod_{\lambda\in\GR}\CA(-,X_\lambda)\] is
well defined since for every $X\in\CA$ we have $X\in\CA_\kappa$
for some $\kappa\in\GR$, so $\CA(X,X_\lambda)=0$ for all
$\lambda\geq\kappa$. It is easy to see that this functor does the
job we claim.
\end{example}

\section{The abelianization of a well generated triangulated
category}

The main purpose of this section is to show that the
abelianization of a triangulated category which is well generated
in the sense of Neeman is  quasi--locally presentable and
satisfies the hypothesis of Theorem \ref{represent}. Consequently
we obtain a new proof of Brown representability theorem for such
triangulated categories.

Consider a preadditive category $\CT$.  By a {\em$\CT$-module\/}
we understand a functor $X:\CT\opp\to\Ab$. Such a functor is
called {\em finitely presentable\/} if there is an exact sequence
of functors \[\CT(-,y)\to\CT(-,x)\to X\to0\] for some $x,y\in\CT$.
Using Yoneda lemma, we know that the class of all natural
transformations between two $\CT$-modules $X$ and $Y$ denoted
$\Hom_\CT(X,Y)$ is actually a set, provided that $X$ is finitely
presentable. We consider the category $\md(\CT)$ of all finitely
presentable $\CT$-modules, having $\Hom_\CT(X,Y)$ as morphisms
spaces, for all $X,Y\in\md(\CT)$. The Yoneda functor
\[H=H_\CT:\CT\to\md(\CT)\hbox{ given by }H_\CT(x)=\CT(-,x)\] is  an
embedding of $\CT$ into $\md(\CT)$, according to Yoneda lemma. If,
in addition, $\CT$ has coproducts then $\md(\CT)$ is cocomplete
and the Yoneda embedding preserves coproducts. It is also
well--known (and easy to prove) that, if $F:\CT\to\CA$ is a
functor into an additive category with cokernels, then there is a
unique, up to a natural isomorphism, right exact functor
$F^*:\md(\CT)\to\CA$, such that $F=F^*H_\CT$ (see \cite[Lemma
A.1]{KL}). Moreover, $F$ preserves coproducts if and only if $F^*$
preserves colimits.

In this section the category $\CT$ will be triangulated with
splitting idempotents. For definition and basic properties of
triangulated categories the standard reference is \cite{N}. Note
that $\CT$ has splitting idempotents, provided that $\CT$ has
countable coproducts, according to \cite[Proposition 1.6.8]{N}.
Recall that $\CT$ is supposed to be additive. A functor
$\CT\to\CA$ into an abelian category $\CA$ is called {\em
homological\/} if it sends triangles into exact sequences. A
contravariant functor $\CT\to\CA$ which is homological regarded as
a functor $\CT\opp\to\CA$ is called {\em cohomological\/} (see
\cite[Definition 1.1.7 and Remark 1.1.9]{N}). An example of a
homological functor is the Yoneda embedding
$H_\CT:\CT\to\md(\CT)$. We know: $\md(\CT)$ is an abelian
category, and  for every functor $F:\CT\to\CA$  into an abelian
category, the unique right exact functor $F^*:\md(\CT)\to\CA$
extending $F$ is exact if and only if $F$ is homological, by
\cite[Lemma 2.1]{KS}.  This is the reason for which $\md(\CT)$ is
called the {\em abelianization} of the triangulated category $\CT$
and is denoted sometimes by $\A(\CT)$. By \cite[Corollary
5.1.23]{N}, $\A(\CT)$ is a Frobenius abelian category, with enough
injectives and enough projectives, which are, up to isomorphism,
exactly objects of the form $\CT(-,x)$ for some $x\in\CT$.

A first link between representability of functors defined on
$\CT$, respectively on $\A(\CT)$ is given by:

\begin{lemma}\label{fstar} If $\CT$ is a triangulated category
with splitting idempotents, then a cohomological functor
$F:\CT\to\Ab$ is representable if and only if its extension
$F^*:\A(\CT)\to\Ab$ is representable.
\end{lemma}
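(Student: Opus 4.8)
The plan is to reduce everything to two facts already on the table: the identification $F^*\cong\Hom_\CT(-,F)$, where the cohomological functor $F$ is regarded as the $\CT$-module $\CT\opp\to\Ab$, $x\mapsto F(x)$; and the description of the injective objects of $\A(\CT)$ from \cite[Corollary 5.1.23]{N} as being, up to isomorphism, exactly the objects $\CT(-,x)=H_\CT(x)$ with $x\in\CT$. Recall also that, since $F$ is cohomological, its extension $F^*:\A(\CT)\to\Ab$ is (contravariant) exact, and that $F^*H_\CT=F$.

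For the ``only if'' direction, I would assume $F\cong\CT(-,x)=H_\CT(x)$ for some $x\in\CT$. Then for every $X\in\A(\CT)=\md(\CT)$ we have $F^*(X)\cong\Hom_\CT(X,H_\CT(x))$, and because $H_\CT(x)$ already lies in $\md(\CT)$ this abelian group is precisely $\Hom_{\A(\CT)}(X,H_\CT(x))$, naturally in $X$. Hence $F^*\cong\Hom_{\A(\CT)}(-,H_\CT(x))$ is representable.

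Conversely, suppose $F^*\cong\Hom_{\A(\CT)}(-,M)$ for some $M\in\A(\CT)$. Since $F^*$ is exact, $\Hom_{\A(\CT)}(-,M)$ is exact, which is exactly the statement that $M$ is an injective object of $\A(\CT)$; by \cite[Corollary 5.1.23]{N} we may therefore take $M\cong H_\CT(x)$ for some $x\in\CT$. Precomposing the isomorphism $F^*\cong\Hom_{\A(\CT)}(-,H_\CT(x))$ with the Yoneda embedding $H_\CT:\CT\to\A(\CT)$ and using $F^*H_\CT=F$ on one side and the Yoneda lemma $\Hom_{\A(\CT)}(H_\CT(-),H_\CT(x))\cong\CT(-,x)$ on the other, we obtain $F\cong\CT(-,x)$ as $\CT$-modules, i.e.\ $F$ is representable.

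The one point that needs care is the converse: one must resist stopping at the observation that $F^*\cong\Hom_{\A(\CT)}(-,M)$ forces $F\cong M$ in $\md(\CT)$, since that only shows $F$ is finitely presentable, not representable. The essential extra ingredient is that representability of the \emph{exact} functor $F^*$ pins the representing object down to an injective object of $\A(\CT)$, and that in the abelianization the injectives coincide with the representable $\CT$-modules; this is precisely where the hypothesis that $\CT$ has splitting idempotents is used, through \cite[Corollary 5.1.23]{N}.
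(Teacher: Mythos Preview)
Your proof is correct and follows essentially the same approach as the paper's. The only cosmetic difference is in the converse: the paper first observes that the representing object $M$ must be isomorphic to $F$ (via Yoneda, since $F^*\cong\Hom_\CT(-,F)$), so $F\in\A(\CT)$, and then uses exactness of $F^*$ to conclude $F$ is injective, hence representable; you instead first deduce $M$ is injective, identify $M\cong H_\CT(x)$, and then restrict along $H_\CT$ to get $F\cong\CT(-,x)$. These are the same argument in a different order.
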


\begin{proof} The cohomological functor $F:\CT\to\Ab$
can be interpreted as a  homological
functor $\CT\to\Ab\opp$ which has a unique extension to $\A(\CT)$.
Therefore $F$ extends uniquely to a contravariant, exact functor
$F^*:\A(\CT)\to\Ab$, defined as $F^*\cong\Hom_\CT(-,F)$. We recall
that $\Hom$ denotes the set of all natural transformations, and it
coincides with the morphisms spaces in $\A(\CT)$ only if
$F\in\A(\CT)$.

If $F$ is representable, then $F\in\A(\CT)$, and $F^*$ is
represented by $F$. Conversely if $F^*$ is representable by an
object in $\A(\CT)$ then this object must be isomorphic to $F$,
therefore $F\in\A(\CT)$. Because $F^*$ is exact, $F$ must be
injective, hence representable.
\end{proof}

We say that $\CT$ satisfies Brown representability theorem if
every cohomological functor $F:\CT\to\Ab$ which sends coproducts
into products is representable. Then we record:

\begin{corollary}\label{reform}
Let $\CT$ be a triangulated category with coproducts. The
following are equivalent: \begin{itemize} \item[{\rm (i)}] $\CT$
satisfies Brown representability theorem. \item[{\rm (ii)}] Every
exact contravariant functor $F:\A(\CT)\to\Ab$ which sends coproducts into products
 is representable. \item[{\rm (iii)}] Every exact
covariant functor $F:\A(\CT)\to\CA$ which preserves
colimits, having values into an abelian cocomplete category with
enough injectives has a right adjoint.
\end{itemize}
\end{corollary}

\begin{proof} The equivalence (i)$\Leftrightarrow$(ii) follows by
Lemma \ref{fstar}, whereas and the implication
(iii)$\Rightarrow$(ii) is obvious, by replacing contravariant
functors $\A(\CT)\to\Ab$ with covariant functors
$\A(\CT)\to\Ab\opp$. Finally (i)$\Rightarrow$(iii) follows by
\cite[Theorem 1.1]{BM}.
\end{proof}

Let $\CT$ is a triangulated category with coproducts. We need the
following definitions: For regular cardinal $\lambda$, a
{\em$\lambda$--localizing} subcategory of $\CT$ is a triangulated
subcategory closed under $\lambda$--coproducts. A {\em localizing}
subcategory is a subcategory which is $\lambda$--localizing, for
all $\lambda$. Consider a set of objects $\CS\subseteq\CT$ which is closed under suspensions 
and desuspensions. We say that $\CT$ is generated (in the triangulated
sense) by $\CS$, provided that an
object $t\in\CT$ vanishes, whenever $\CT(s,t)=0$ for all
$s\in\CS$. Further we say that $\CT$ is perfectly generated by the
set of objects $\CS$ if $\CS$ generates $\CT$ and, for any
$s\in\CS$, the map $\CT(s,\coprod_{i\in
I}x_i)\to\CT(s,\coprod_{i\in I}y_i)$ is surjective, for every set
of maps $\{x_i\to y_i\mid i\in I\}$ such that
$\CT(s,x_i)\to\CT(s,y_i)$ is surjective, for all $i\in I$. Finally
$\CT$ is called {\em well $\lambda$--generated}, where
$\lambda\in\GR$, provided that $\CT$ is perfectly generated by a
set of objects which are also {\em $\lambda$--small}, that is,
every map $s\to\coprod_{i\in I}x_i$, with $s\in\CS$, factors
trough a coproduct $\coprod_{i\in I'}x_i$ with $\card
I'<\lambda$; the category $\CT$ is well generated if it is well
$\lambda$--generated, for some $\lambda$. Following \cite[Theorem
A]{KN}, this definition is equivalent to the original one given by
Neeman. Note that, by \cite[Corollary 2.6]{MB}, if $\CT$ is
perfectly generated by $\CS$, then $\CT$ coincides with its
smallest $\aleph_1$--localizing subcategory which contains
arbitrary coproducts of objects in $\CS$.

A category $\CC$ is called {\em$\lambda$--cocomplete} if
$\CC$ has $\lambda$--coproducts and cokernels. It is easy to see
that $\CC$ is $\lambda$--cocomplete if and only if it contains all
colimits of diagrams with less that $\lambda$ morphisms. A
$\CC$-module over a $\lambda$--cocomplete category is called {\em
$\lambda$--left exact\/} if it is left exact and sends
$\lambda$--coproducts into products. Provided that the category
$\CC$ is essentially small, the class $\Hom_\CC(X,Y)$ is actually
a set for all $\CC$-modules $X,Y$. Thus we are allowed to consider the
category $\Md(\CC)$ of all $\CC$-modules. If $\CC$ is also
$\lambda$--cocomplete, then denote by $\lex_\lambda(\CC^\opp,\Ab)$
the full subcategory of $\Md(\CC)$ consisting of $\lambda$--left
exact modules. We know that $\lex_\lambda(\CC^\opp,\Ab)$ is a
locally $\lambda$--presentable category, and the embedding
$\CC\to\lex_\lambda(\CC^\opp,\Ab)$ given by $X\mapsto\CC(-,X)$
identifies $\CC$, up to isomorphism, with the subcategory of
$\lambda$--presentable objects in $\lex_\lambda(\CC^\opp,\Ab)$
(see \cite[Korollar 7.9]{GU}).

As before, let $\lambda$ denote a regular cardinal.  If $\CS$ is
an preadditive, essentially small category with
$\lambda$--coproducts, denote by $\Prod_\lambda(\CS^\opp,\Ab)$ the
full subcategory of $\Md(\CS)$, consisting of those modules which
preserve $\lambda$--products. Clearly a finitely presentable
$\CS$-module, that is an element in $\md(\CS)$, preserves
arbitrary products, hence it belongs to
$\Prod_\lambda(\CS\opp,\Ab)$.

\begin{lemma}\label{lexeqpr} For a regular cardinal $\lambda$,
consider an additive, essentially small category $\CS$ having
$\lambda$--coproducts.  Then $\Prod_\lambda(\CS^\opp,\Ab)$ is a
locally $\lambda$--presentable category, and the embedding
$\md(\CS)\stackrel{\subseteq}\longrightarrow\Prod_\lambda(\CS^\opp,\Ab)$
identifies $\md(\CS)$ with the full subcategory of
$\Prod_\lambda(\CS^\opp,\Ab)$ consisting of all
$\lambda$--presentable objects.
\end{lemma}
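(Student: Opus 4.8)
The plan is to reduce the statement to the Gabriel--Ulmer result recalled just above --- that $\lex_\lambda(\CC^\opp,\Ab)$ is locally $\lambda$--presentable whenever $\CC$ is $\lambda$--cocomplete --- applied to $\CC=\md(\CS)$. So I would first prove that $\md(\CS)$ is $\lambda$--cocomplete, and then that restriction along the Yoneda embedding $h:\CS\to\md(\CS)$, $h(x)=\CS(-,x)$, yields an equivalence $\lex_\lambda(\md(\CS)^\opp,\Ab)\simeq\Prod_\lambda(\CS^\opp,\Ab)$.

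The crux is that $h$ preserves $\lambda$--coproducts. Indeed, for objects $x_i\in\CS$ ($i$ in a set of cardinality $<\lambda$) and any $Z\in\md(\CS)$, Yoneda gives $\Hom_{\md(\CS)}(\CS(-,\coprod_i x_i),Z)\cong Z(\coprod_i x_i)$, and since every object of $\md(\CS)$ preserves products (as noted just before the statement) this equals $\prod_i Z(x_i)\cong\prod_i\Hom_{\md(\CS)}(\CS(-,x_i),Z)$; hence $\CS(-,\coprod_i x_i)$, with the maps induced by the coproduct injections, is $\coprod_i^{\md(\CS)}\CS(-,x_i)$. As $\md(\CS)$ is always closed under cokernels in $\Md(\CS)$, it follows that a family $(X_i)$ in $\md(\CS)$ of size $<\lambda$ with presentations $\CS(-,b_i)\to\CS(-,a_i)\to X_i\to0$ has a coproduct in $\md(\CS)$, namely the cokernel of the assembled map $\CS(-,\coprod_i b_i)\to\CS(-,\coprod_i a_i)$ (whose universal property is checked just as above, using again that objects of $\md(\CS)$ preserve products). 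Thus $\md(\CS)$ is $\lambda$--cocomplete, and by \cite[Korollar 7.9]{GU} the category $\lex_\lambda(\md(\CS)^\opp,\Ab)$ is locally $\lambda$--presentable with the Yoneda embedding $X\mapsto\md(\CS)(-,X)$ identifying $\md(\CS)$ with its full subcategory of $\lambda$--presentable objects.

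For the equivalence, set $\Psi(N)=N\circ h^\opp$ for $N\in\lex_\lambda(\md(\CS)^\opp,\Ab)$; this lands in $\Prod_\lambda(\CS^\opp,\Ab)$ because $N$ takes $\lambda$--coproducts of $\md(\CS)$ to products and $h$ preserves $\lambda$--coproducts. A quasi--inverse comes from \cite[Lemma A.1]{KL}: given $M\in\Prod_\lambda(\CS^\opp,\Ab)$, viewed as a functor $\CS\to\Ab^\opp$ into an additive category with cokernels, it extends uniquely to a right exact functor $M^*:\md(\CS)\to\Ab^\opp$ with $M^*h=M$; reading $M^*$ back as a contravariant functor $\Theta(M):\md(\CS)^\opp\to\Ab$, right exactness makes $\Theta(M)$ turn cokernels into kernels, and a short computation with $\coprod_i^{\md(\CS)}X_i=\coker(\CS(-,\coprod_i b_i)\to\CS(-,\coprod_i a_i))$ together with right exactness of $M^*$ shows $M^*$ preserves $\lambda$--coproducts because $M$ does, so $\Theta(M)\in\lex_\lambda(\md(\CS)^\opp,\Ab)$. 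The relations $\Psi\Theta\cong\id$ and $\Theta\Psi\cong\id$ then drop out of the uniqueness clause of \cite[Lemma A.1]{KL}, on observing that the canonical extension of $N\circ h$ is $N$ itself (being already right exact). Hence $\Prod_\lambda(\CS^\opp,\Ab)\simeq\lex_\lambda(\md(\CS)^\opp,\Ab)$ is locally $\lambda$--presentable.

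Finally I would identify the $\lambda$--presentable objects: evaluating at $y\in\CS$, $\Psi(\md(\CS)(-,X))(y)=\md(\CS)(\CS(-,y),X)\cong X(y)$ by Yoneda, so $\Psi(\md(\CS)(-,X))\cong X$; therefore $\Psi$ carries the full subcategory of $\lambda$--presentable objects of $\lex_\lambda(\md(\CS)^\opp,\Ab)$ onto $\md(\CS)\subseteq\Prod_\lambda(\CS^\opp,\Ab)$, and the composite $\md(\CS)\to\lex_\lambda(\md(\CS)^\opp,\Ab)\to\Prod_\lambda(\CS^\opp,\Ab)$ of the two displayed embeddings is precisely the inclusion in the statement. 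The only step with real content is the claim that $h$ preserves $\lambda$--coproducts, hence that $\md(\CS)$ is $\lambda$--cocomplete so that Gabriel--Ulmer applies; the rest is quoted from the cited results or is routine bookkeeping of variance between $\Ab$ and $\Ab^\opp$. One should also note that \cite[Lemma A.1]{KL} is stated for categories with all coproducts, but its proof goes through verbatim when ``coproduct'' is read as ``coproduct of fewer than $\lambda$ objects'' throughout, which is all that is needed here.
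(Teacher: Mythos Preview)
Your proof is correct and follows essentially the same route as the paper: show $\md(\CS)$ is $\lambda$--cocomplete, exhibit the equivalence $\lex_\lambda(\md(\CS)^\opp,\Ab)\simeq\Prod_\lambda(\CS^\opp,\Ab)$ given by restriction along the Yoneda functor, and then read off both conclusions from Gabriel--Ulmer's \cite[Korollar 7.9]{GU}. The only difference is that the paper simply cites \cite[Lemma B.1]{KL} for the equivalence, whereas you reconstruct it by hand from \cite[Lemma A.1]{KL}; your version is more self-contained but not a different argument.
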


\begin{proof} The category $\md(\CS)$ has obviously
$\lambda$--coproducts and cokernels, so it is
$\lambda$--cocomplete.  According to \cite[Lemma B.1]{KL}, there
is an equivalence of categories
\[\lex_\lambda(\md(\CS)^\opp,\Ab)\to\Prod_\lambda(\CS^\opp,\Ab),\
X\mapsto XH_\CS,\] where $H_\CS:\CS\to\md(\CS)$ denotes the Yoneda
functor. Thus $\Prod_\lambda(\CS\opp,\Ab)$ is locally
$\lambda$--presentable. Further, the identification of
$\lambda$--presentable objects in $\Prod_\lambda(\CS\opp,\Ab)$
follows by discussion above concerning $\lambda$--presentable
objects in $\lex_\lambda(\CC\opp,\Ab)$. \end{proof}

Suppose now that $\CT$ is well $\kappa$--generated triangulated
category, having a perfectly generating set $\CS$ consisting of
$\kappa$-small objects. For any $\lambda\geq\kappa$ we consider
the smallest $\lambda$ localizing subcategory of $\CT$ which
contains $\CS$ and denote it by $\CT^\lambda$. The objects in $\CT^\lambda$
are called {\em$\lambda$--compact}. By \cite[Lemma
5]{KN} the category of $\lambda$--compact objects in $\CT$ is
independent of $\CS$. Clearly it is essentially small and a
skeleton of $\CT^\lambda$ generates $\CT$. Moreover $\CT^\lambda$ has $\lambda$-coproducts.
Denote $\A_\lambda(\CT)=\Prod_\lambda((\CT^\lambda)\opp,\Ab)$, for
$\lambda\geq\kappa$ and $\A_\lambda(\CT)=0$ otherwise. We know by
\cite[Proposition A.1.8]{N} that $\A_\lambda(\CT)$ is locally
$\lambda$--presentable, and by \cite[Proposition 6.5.3]{N} that 
the restriction functor $R_\lambda:\A(\CT)\to\A_\lambda(\CT)$ has a fully faithful 
left adjoint $I_\lambda:\A_\lambda(\CT)\to\A(\CT)$, therefore we may identify 
$\A_\lambda(\CT)$ to a coreflective subcategory of $\A(\CT)$.

\begin{proposition}\label{wellgen}
Fix a regular cardinal $\kappa>\aleph_0$. If $\CT$ is a well
$\kappa$--generated triangulated category, then $\A(\CT)$ is a
quasi--locally presentable abelian category which is weakly
$\kappa$--generated.
\end{proposition}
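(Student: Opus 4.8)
The plan is to verify the two assertions of the proposition separately: first that $\A(\CT) = \bigcup_{\lambda \in \GR} \A_\lambda(\CT)$ is a quasi--locally presentable abelian category, and second that this union is weakly $\kappa$--generated. For the first part, the bulk of the work has already been imported: from Lemma \ref{lexeqpr} (or equivalently \cite[Proposition A.1.8]{N}) we know each $\A_\lambda(\CT) = \Prod_\lambda((\CT^\lambda)\opp,\Ab)$ is locally $\lambda$--presentable, and from \cite[Proposition 6.5.3]{N} each is a coreflective subcategory of $\A(\CT)$ whose inclusion $I_\lambda$ preserves colimits (being a left adjoint to $R_\lambda$). So the only genuinely new thing to check for quasi--local presentability is the \emph{set--theoretic} equality $\A(\CT) = \bigcup_{\lambda} \A_\lambda(\CT)$, i.e. that every finitely presentable $\CT$--module lies in some $\A_\lambda(\CT)$. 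I would argue this as follows: an object $X \in \A(\CT) = \md(\CT)$ fits into an exact sequence $\CT(-,y) \to \CT(-,x) \to X \to 0$ with $x,y \in \CT$; choose $\lambda \geq \kappa$ large enough that $x$ and $y$ both lie in $\CT^\lambda$ (possible since $\CT = \bigcup_\lambda \CT^\lambda$ as $\CS$ generates and each object of $\CT$ is reached by the localizing--subcategory construction), and then $\CT(-,x), \CT(-,y)$ are representable functors on $\CT^\lambda$, hence finitely presentable $\CT^\lambda$--modules, hence lie in $\A_\lambda(\CT)$; since $\A_\lambda(\CT)$ is closed under cokernels in $\A(\CT)$ (it is coreflective, hence closed under colimits), $X \in \A_\lambda(\CT)$. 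I would also note that, $\A_\lambda(\CT)$ being coreflective and abelian, it is itself an abelian category, as required by the definition.

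For the second part, weak $\kappa$--generation, I must show $\A(\CT)$ coincides with its smallest full subcategory $\CB$ containing $\A_\kappa(\CT)$ and closed under kernels, cokernels, extensions and countable coproducts. The key input here is \cite[Corollary 2.6]{MB}, quoted in the excerpt: since $\CT$ is perfectly generated by $\CS$, $\CT$ is its own smallest $\aleph_1$--localizing subcategory containing all coproducts of objects of $\CS$. The strategy is to transport this closure statement through the Yoneda embedding $H_\CT : \CT \to \A(\CT)$, which is homological and preserves coproducts. Concretely: every $\CT(-,x)$ with $x \in \CT$ can be obtained from the $\CT(-,s)$, $s \in \CS$, by finitely many triangles (which $H_\CT$ sends to short exact sequences, i.e.\ extensions in $\A(\CT)$) and arbitrary coproducts; and since $\CS \subseteq \CT^\kappa$ and $\kappa > \aleph_0$, each $\CT(-,s)$ lies in $\A_\kappa(\CT) \subseteq \CB$. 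Because $\kappa > \aleph_0$ one also has all countable coproducts of objects of $\CS$ already inside $\A_\kappa(\CT)$; combined with the $\aleph_1$--localizing closure of $\CT$ and the fact that $\CB$ is closed under extensions and countable coproducts, one gets every projective $\CT(-,x)$ into $\CB$, and then every $X \in \A(\CT)$ into $\CB$ via its presentation $\CT(-,y) \to \CT(-,x) \to X \to 0$ (a cokernel). Hence $\CB = \A(\CT)$.

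The main obstacle I anticipate is the bookkeeping in the second part: matching Neeman's $\aleph_1$--localizing generation of $\CT$ (which allows \emph{arbitrary} coproducts of the generators but only \emph{countable} coproducts in the inductive building steps, plus triangles) against the closure operations permitted in the definition of weak $\kappa$--generation (kernels, cokernels, extensions, countable coproducts), and verifying that arbitrary coproducts of generators $\CT(-,s)$, $s \in \CS$, really do land in $\A_\kappa(\CT)$ — this is where $\kappa$--smallness of the objects of $\CS$ and the hypothesis $\kappa > \aleph_0$ are used, to ensure that such a coproduct is still a $\lambda$--left--exact module on $\CT^\kappa$, equivalently a product--preserving module in $\Prod_\kappa((\CT^\kappa)\opp,\Ab)$. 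A secondary point requiring care is confirming that a triangle in $\CT$ maps under the Yoneda embedding to a short exact sequence in $\A(\CT)$ realizing the middle term as an extension of the outer two (this is exactly the content of $H_\CT$ being homological together with $\A(\CT)$ being Frobenius, already recalled in Section 1), so that the word ``extension'' in the definition of weak generation is the right one to invoke.
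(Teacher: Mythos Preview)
Your overall architecture matches the paper's: first reduce quasi--local presentability to $\CT=\bigcup_\lambda\CT^\lambda$ plus the presentation $H(y)\to H(x)\to X\to0$, then show weak $\kappa$--generation by proving every $H(x)$ lies in the closure $\CB$ and finishing with the same presentation. The paper uses \cite[Theorem 2.5]{MB} (the homotopy--colimit tower) where you use \cite[Corollary 2.6]{MB} (the $\aleph_1$--localizing closure); these are essentially interchangeable inputs. Your observation that arbitrary coproducts of representables $H(s)$, $s\in\CS$, land in $\A_\kappa(\CT)$ simply because $\A_\kappa(\CT)$ is coreflective (hence colimit--closed) is exactly what the paper uses implicitly.

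There is, however, one genuine slip. You write that a triangle $x\to y\to z\rightsquigarrow$ yields a short exact sequence $0\to H(x)\to H(y)\to H(z)\to0$ in $\A(\CT)$, so that ``extension'' is the right closure word. This is false in general: $H(y)\to H(z)$ is epi only when $y\to z$ splits, because $H(z)$ is projective. What the Yoneda functor actually produces is the five--term exact sequence
\[
H(\Sigma^{-1}z)\to H(x)\to H(y)\to H(z)\to H(\Sigma x),
\]
and one needs \emph{all} of the closure operations---kernels, cokernels and extensions---to conclude $H(y)\in\CB$ from the four outer terms lying in $\CB$. The paper makes exactly this point: it first proves the small lemma that for any exact $T\to U\to X\to Y\to Z$ with $T,U,Y,Z\in\CA$ one has $X\in\CA$ (by taking $X'=\coker(T\to U)$, $X''=\ker(Y\to Z)$, and recognising $X$ as an extension of $X''$ by $X'$), and only then applies it to the image of a triangle. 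Once you replace your short--exact--sequence claim by this five--term argument, and carry the induction over all suspensions (so that $H(\Sigma^{\pm1}z)$ is available), your proof goes through along the same lines as the paper's.
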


\begin{proof}
Denote by $\CA$ the smallest subcategory of $\A(\CT)$ which is
closed under kernels, cokernels, extensions, countable coproducts
and contains $\A_\kappa(\CT)$. Let us show that $\A(\CT)=\CA$.
Observe first that if $T\to U\to X\to Y\to Z$ is an exact sequence
with $T,U,Y,Z\in\CA$ then we can construct the commutative diagram
with
exact rows and column \[\diagram &&0\dto&&\\
T\ddouble\rto&U\ddouble\rto&X'\rto\dto&0&\\
T\rto&U\rto&X\rto\dto&Y\rto\ddouble&Z\ddouble\\
&0\rto&X''\rto\dto&Y\rto&Z\\
&&0&&
\enddiagram\] showing that $X\in\CA$. Therefore if $x\to y\to z\rightsquigarrow$
is a triangle in $\CT$ with $H(x),H(z)\in\CA$ then $H(y)\in\CA$.
It is shown in \cite[Theorem 2.5]{MB} that every object $x\in\CT$
is isomorphic to a homotopy colimit of a tower $x^0\to
x^1\to\cdots$ such that $x^0=0$ and for every $n\in\N$ we have a
triangle $p_n\to x^n\to x^{n+1}\rightsquigarrow$ with $p_n$ being
a coproduct of objects in $\CT^\kappa$. Inductively
$H(x^n)\in\CA$, for all $n\in\N$, hence
$H(\coprod_{n\in\N}x^n)\cong\coprod_{n\in\N}H(x^n)\in\CA$, and
finally $H(x)\in\CA$. Now, for every $X\in\A(\CT)$ there is an
exact sequence $H(y)\to H(x)\to X\to 0$, with $x,y\in\CT$, thus
$X\in\CA$.

Note that we have already shown that $\CT$ coincides with its
smallest $\aleph_1$-localizing subcategory which contains a
skeleton of $\CT^\kappa$. Therefore the proof of \cite[Proposition
8.4.2]{N} (more precisely \cite[8.4.2.3]{N}) works for our case,
hence $\CT=\bigcup_{\lambda\geq\kappa}\CT^\lambda$, and further 
$\A(\CT)=\bigcup_{\lambda\in\GR}\A_\lambda(\CT)$. In addition an immediate consequence of 
Lemma \cite[6.5.1]{N} is that the right adjoint of the inclusion functor 
$\A_\lambda(\CT)\to\A(\CT)$ preserves colimits, and all conditions from the definition of a 
weakly $\kappa$-generated quasi--locally presentable category are fulfilled. 
\end{proof}

\begin{theorem}\label{rep}
If $\CT$ is a well generated triangulated category, then every
functor $F:\A(\CT)\to\Ab$ which is contravariant, exact and sends
coproducts into products is representable.
\end{theorem}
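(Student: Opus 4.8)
The plan is to derive Theorem~\ref{rep} from Theorem~\ref{represent}, using Proposition~\ref{wellgen} to supply the quasi--locally presentable structure; the only genuinely new point to check is the embedding hypothesis of Theorem~\ref{represent}, namely that every $\lambda$--presentable object of $\A_\lambda(\CT)$ embeds into an object of $\Inj_\lambda\A(\CT)$.

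First I would set things up. Since $\CT$ is well generated it is well $\kappa$--generated for some regular cardinal $\kappa$; a compactly generated category is moreover well $\aleph_1$--generated, so replacing $\kappa$ by $\aleph_1$ in the case $\kappa=\aleph_0$ we may assume $\kappa>\aleph_0$. By Proposition~\ref{wellgen} the abelian category $\CA:=\A(\CT)$ is quasi--locally presentable and weakly $\kappa$--generated, with $\CA_\lambda=\A_\lambda(\CT)=\Prod_\lambda((\CT^\lambda)\opp,\Ab)$ for regular $\lambda\geq\kappa$, where $\CT^\lambda$ is the smallest $\lambda$--localizing (in particular, triangulated) subcategory of $\CT$ containing a fixed perfectly generating set of $\kappa$--small objects, and a skeleton of $\CT^\lambda$ is essentially small. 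Recall also that $\A(\CT)$ is Frobenius and that its injective objects are, up to isomorphism, exactly the representables $\CT(-,w)$, $w\in\CT$. It therefore remains to verify, for each regular $\lambda\geq\kappa$, that every $X\in\CA_\lambda^\lambda$ admits a monomorphism into some injective $S\in\CA$ which is $\lambda$--presentable in $\CA_\lambda$.

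Fix such a $\lambda$ and $X\in\CA_\lambda^\lambda$. By Lemma~\ref{lexeqpr} the $\lambda$--presentable objects of $\A_\lambda(\CT)=\Prod_\lambda((\CT^\lambda)\opp,\Ab)$ are precisely the finitely presentable $\CT^\lambda$--modules, so $X$ sits in an exact sequence $\CT^\lambda(-,y)\to\CT^\lambda(-,x)\to X\to0$ arising from a morphism $\bar f\colon y\to x$ of $\CT^\lambda$. The inclusion $I_\lambda\colon\A_\lambda(\CT)\to\A(\CT)$ is left adjoint to the restriction functor $R_\lambda$, hence right exact, and it carries $\CT^\lambda(-,w)$ to $\CT(-,w)$ for $w\in\CT^\lambda$ (being left adjoint to restriction it restricts on representables to the embedding $\CT^\lambda\hookrightarrow\CT$ followed by Yoneda); applying it to the presentation above yields an exact sequence $\CT(-,y)\xrightarrow{H(\bar f)}\CT(-,x)\to X\to0$ in $\A(\CT)=\md(\CT)$, where $H=\CT(-,-)$ is the homological Yoneda functor. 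Now complete $\bar f$ to a triangle $y\xrightarrow{\bar f}x\xrightarrow{g}z\rightsquigarrow$ in $\CT$; since $\CT^\lambda$ is a triangulated subcategory, $z\in\CT^\lambda$. Feeding this triangle into $H$ and rotating produces an exact sequence of $\CT$--modules $\cdots\to\CT(-,y)\xrightarrow{H(\bar f)}\CT(-,x)\xrightarrow{H(g)}\CT(-,z)\to\cdots$, whence $X=\coker H(\bar f)=\ima H(g)$ and in particular $0\to X\to\CT(-,z)$ is exact. Finally $\CT(-,z)$ is injective in $\A(\CT)$ by the Frobenius description of injectives, and it lies in $\CA_\lambda^\lambda$ because it equals $I_\lambda(\CT^\lambda(-,z))$ and $\CT^\lambda(-,z)$ is $\lambda$--presentable in $\A_\lambda(\CT)$ by Lemma~\ref{lexeqpr}; thus $\CT(-,z)\in\Inj_\lambda\A(\CT)$ and it receives the required embedding of $X$.

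With every hypothesis of Theorem~\ref{represent} now in force for $\CA=\A(\CT)$ and the given contravariant, exact functor $F$ that sends coproducts into products, that theorem gives that $F$ is representable, which proves the statement. I expect the main obstacle to be the middle step: pinning down the identification $\A_\lambda(\CT)^\lambda\simeq\md(\CT^\lambda)$ together with the concrete action of the inclusion $I_\lambda$ on finitely presented modules, so that a presentation coming from an honest morphism $\bar f$ of $\CT^\lambda$ transfers verbatim to $\A(\CT)$, and keeping track of the fact that the cone $z$ remains inside $\CT^\lambda$ --- after which the long exact sequence of $H$ and the Frobenius property of $\A(\CT)$ finish the argument routinely.
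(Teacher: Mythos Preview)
Your proof is correct and follows essentially the same route as the paper's: reduce to Theorem~\ref{represent} via Proposition~\ref{wellgen}, then use Lemma~\ref{lexeqpr} to identify $\CA_\lambda^\lambda$ with $\md(\CT^\lambda)$ and verify the embedding hypothesis. The only difference is cosmetic: where the paper invokes \cite[Corollary 5.1.23]{N} to say that every object of $\md(\CT^\lambda)$ embeds into some $H(z)$ with $z\in\CT^\lambda$, you unpack this by explicitly completing the presenting map $\bar f$ to a triangle in $\CT^\lambda$ and reading off $X\cong\ima H(g)\hookrightarrow\CT(-,z)$ --- which is precisely the content of that citation.
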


\begin{proof} Without losing the generality we may assume that
$\CT$ is well $\kappa$--generated, for some $\kappa\geq\aleph_1$
(if not, we replace $\kappa$ by $\aleph_1$). By Proposition
\ref{wellgen}, $\A(\CT)$ is a weakly $\kappa$--generated
quasi--locally presentable category. In order to apply Theorem
\ref{represent} we have only to show that every
$\lambda$--presentable object $X$ of $\A_\lambda(\CT)$ admits an
embedding into an object in $S\in\A_\lambda(\CT)$ which is
$\lambda$--presentable in $\A_\lambda(\CT)$ and injective in
$\A(\CT)$. But this follows immediately from Lemma \ref{lexeqpr},
since, according to \cite[Corollary 5.1.23]{N}, every
$X\in\md(\CT^\lambda)$ admits an embedding into an object of the
form $H(x)$ with $x\in\CT^\lambda$.
\end{proof}

Note that the category $\A(\CT)$ is usually ``huge'', in the sense
that it is not well (co)powered, as we learned on \cite[Appendix
C]{N}. Thus Proposition \ref{wellgen} and Theorem \ref{rep}
provide an example of such a huge category which is quasi--locally
presentable and for which representability Theorem \ref{represent}
applies.

Combining Theorem \ref{rep} with Corollary \ref{reform} we obtain
a new proof for:

\begin{corollary}\label{brt}
Every well generated triangulated categories satisfies Brown
representability theorem.
\end{corollary}

\begin{example}\label{lwgnorep} Recall from \cite{S} the definition:
A triangulated category with coproducts is called {\em locally
well generated}, provided that every localizing subcategory which
is generated (in the triangulated sense) by a set of objects is
well generated. The typical example of a locally well generated
triangulated category, which is not well generated, is the
homotopy category $\HK(\Md R)$ where $R$ is a ring which is not
pure--semisimple (see \cite[Theorem 3.5]{S}). Objects in this
category are complexes of $R$-modules, and maps are classes of
homotopy equivalent maps of complexes.

Let consider $R=\Z$, so $\CT=\HK(\Ab)$ is locally well generated,
but not well generated.  Then we want to construct a non--representable exact
contravariant functor $F:\A(\HK(\Ab))\to\Ab$, which sends
coproducts into products. For this purpose, observe that
there are objects $Y_\lambda\in\HK(\Ab)$
with $\lambda\in\GR$ such that the functor:
\[F=\prod_{\lambda\in\GR}\CT(-,Y_\lambda):\HK(\Ab)\to\Ab\]
is cohomological, sends coproducts into products but is not
representable, as it may be seen in \cite[Example 11]{MS}. Note that the argument
showing that this functor is well defined is similar to the one used in Examples \ref{locG} and \ref{norep}.
By Lemma \ref{fstar} the functor
\[F^*:\A(\HK(\Ab))\to\Ab, F^*(X)=\Hom_{\HK(\Ab)}\left(X,\prod_{\lambda\in\GR}\CT(-,Y_\lambda)\right)\] is
contravariant, exact, sends coproducts into products, but is not
representable.
\end{example}

\end{document}